\newtheorem*{thma}{Theorem~A}
\newtheorem*{thmb}{Theorem~B}
\newtheorem{theorem}{Theorem}[section]
\newtheorem{lemma}[theorem]{Lemma}
\newtheorem{proposition}[theorem]{Proposition}
\newtheorem{corollary}[theorem]{Corollary}
\newtheorem{claim}[theorem]{Claim}
\newtheorem{question}[theorem]{Question}
\newtheorem{example}[theorem]{Example}
\theoremstyle{definition}
\newtheorem{definition}[theorem]{Definition}
\newtheorem{remark}[theorem]{Remark}
\newcommand{\dom}{\mathrm{dom}}
\newcommand{\bb}{\mathbb}
\newcommand{\power}{\ensuremath{\mathscr{P}}}
\newcommand{\Add}{\mathrm{Add}}
\newcommand{\R}{\bb{R}}
\newcommand{\Q}{\bb{Q}}
\renewcommand{\P}{\bb{P}}
\newcommand{\U}{\mathcal{U}}
\newcommand{\ZF}{\sf ZF}
\newcommand{\PFA}{\sf PFA}
\newcommand{\cl}{\mathrm{cl}}
\newcommand{\F}{\mathcal{F}}
\newcommand{\w}{\omega}
\newcommand{\seq}[1]{{\langle{#1}\rangle}}
\renewcommand{\subset}{\subseteq}
\renewcommand{\epsilon}{\varepsilon}
\renewcommand{\phi}{\varphi}
\title{Preservation of some topological properties under forcing}
\begin{document}

%    Information for first author:
\author{Chris Lambie-Hanson}
\address{Institute of Mathematics, 
Czech Academy of Sciences, 
{\v Z}itn{\'a} 25, Prague 1, 
115 67, Czech Republic
}
%    Current address (if needed):
%\curraddr{}
\email{lambiehanson@math.cas.cz}
%\thanks{The first author was supported in part by NSF Grant \#000000.}

%    Information for second author (if needed):
\author{Pedro Marun}
\address{Institute of Mathematics, 
Czech Academy of Sciences, 
{\v Z}itn{\'a} 25, Prague 1, 
115 67, Czech Republic}
\email{marun@math.cas.cz}
%\thanks{Support information for the second author.}

%    General info
%%%%%%%%%%%%%%%%%%%%%%%%%%%%%%%%%%%%%%%%%%%%%%%%%%%
\subjclass[2020]{03E40, 54D20, 54D99}
%
%         Please use the current 2020 Mathematics Subject Classification:
%         https://mathscinet.ams.org/mathscinet/msc/msc2020.html
%         https://zbmath.org/classification/
%%%%%%%%%%%%%%%%%%%%%%%%%%%%%%%%%%%%%%%%%%%%%%%%%%%

\keywords{Topology, forcing, strong properness, countable tightness, games}
\thanks {Both authors were supported by the Czech Academy of Sciences (RVO 67985840) and the GA\v{C}R project 23-04683S}

\begin{abstract}
We add to the theory of preservation of topological properties under forcing. In particular, we answer a question of Gilton and Holshouser in a strong sense, showing that if player II has a winning strategy in the strong countable fan tightness game of a space at a point, then this continues to hold in every set forcing extension of the universe. The same is also true for the Rothberger game, but not for the countable fan tightness or Menger games.
\end{abstract}

\maketitle

\section{Introduction}

Suppose that $V\subset W$ are transitive models of set theory and that $(X,\tau)\in V$ is such that $V\models $``$(X,\tau)$ is a topological space''. From the point of view of $W$, $(X,\tau)$ need not be a topological space, because $W$ may contain new subsets of $\tau$ whose union is not in $\tau$. On the other hand, from the point of view of $W$, $\tau$ is a basis for some topology on the set $X$, and one can ask how this topology compares with $\tau$. In practice, one often starts with a topological space $(X,\tau)$ satisfying a property $\Phi$ in some ground model $V$, passes to a generic extension by some forcing $\P$, and asks whether the topology generated by $\tau$ in $V^\P$ satisfies property $\Phi$. In general, there is no straightforward answer to this question; the situation depends on the space $(X,\tau)$, the property $\Phi$ and the forcing poset $\Phi$. The following examples illustrate this:

\begin{example}[{\cite[Example 4.1]{giltonPreservationTopologicalProperties2025}}]
Let $X$ be the closed unit interval $[0,1]$ with the Euclidean topology. If $\P$ is any forcing that adds a new real, then $X$ will not be compact in $V^\P$, for it contains all the rationals between $0$ and $1$ and is therefore not closed as a subset of the closed unit interval of the generic extension, which is a compact Hausdorff space in $V^\P$.
\end{example}

A less trivial example:

\begin{example}[Burke, {\cite[Theorem 17]{tallCardinalityLindelofSpaces1995}}]\label{X}
The space $X={}^{\omega_1} 2$ with the product topology\footnote{where each copy of $2$ is given the discrete topology} is compact by Tychonoff's theorem, but is not even Lindel\"of in the generic extension by $\Add(\omega_1,1)$. Indeed, let $\dot g$ be a name for the generic function $\omega_1\to 2$. For each $\alpha<\omega_1$, let $\dot U_\alpha$ be a $\P$-name for the set
\[
\{x\in X:x(\alpha) = \dot g(\alpha)\}.
\]
Observe that the interpretation of $\dot U_\alpha$ by any generic filter is a ground-model open set, hence it is forced that $\dot U_\alpha$ is open in $X$ (with respect to the new topology). An easy genericity argument shows that $\Vdash X=\bigcup_{\alpha<\omega_1}\dot U_\alpha$, but no countable sub-collection of $\{\dot U_\alpha:\alpha<\omega_1\}$ can cover $X$.
\end{example}

The preservation of the Lindel\"of property under countably closed forcings has received the most attention in the published literature, so much so that ``indestructible Lindel\"of'' usually means a Lindel\"of space which remains so after any countably closed forcing. For a wealth of information on this matter, see \cite{tallCardinalityLindelofSpaces1995} and \cite{scheepersLindelofIndestructibilityTopological2010}.

In this paper, we consider the preservation of game-theoretic versions of classical topological 
properties, namely the Menger and Rothberger properties, which strengthen the Lindel\"{o}f 
property, and (strong) countable fan tightness, a strengthening of countable tightness.
In \cite[Question 8.1]{giltonPreservationTopologicalProperties2025}, Gilton and Holshouser ask whether player II having a winning strategy in the strong countable fan tightness game is preserved by strongly proper forcings (for undefined notions, see section 2). Our main theorem shows that the answer is ``Yes'', in a very strong sense:

\begin{thma}
Suppose that $(X, \tau)$ is a topological space and player II has a winning strategy in the strong countable fan tightness game for $X$. Then this is preserved in any forcing extension of $V$.
\end{thma}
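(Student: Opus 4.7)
I focus on the strategy at a fixed point $x \in X$; the argument is uniform in $x$. Let $\sigma \in V$ be a winning strategy for player II.

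The starting observation is an absoluteness fact: for any $A \sub X$ in $V[G]$, the statement $x \in \overline{A}$ is equivalent to ``$U \cap A \neq \emptyset$ for every $U$ in the ground-model basis $\tau_x = \{U \in \tau : x \in U\}$''. Since $\tau_x \in V$, both the legality of player I's moves and player II's winning condition are absolute between $V$ and $V[G]$.

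The main tool is a name-based hull. Given a $\P$-name $\dot A$ with $\Vdash x \in \overline{\dot A}$, set $\bar A = \{y \in X : \exists q \in \P,\ q \Vdash y \in \dot A\} \in V$. A routine density argument shows $x \in \overline{\bar A}$ in $V$, while $\dot A^G \sub \bar A$ in $V[G]$ for every generic $G$. So in $V$ one can feed $\sigma$ a sequence of such hulls $(\bar A_n)_{n < \w}$ to produce targets $y_n \in \bar A_n$ with $x \in \overline{\{y_n : n < \w\}}$, a closure assertion that persists in $V[G]$ by absoluteness.

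To construct $\tilde\sigma$ in $V[G]$, given the finite play $(A_0, \ldots, A_n)$, I fix names $\dot A_i$ realizing the $A_i$, form the hulls $\bar A_i$, and apply $\sigma$ in $V$ to obtain a target $y_n \in \bar A_n$. The main obstacle is that $y_n$ lies in the hull $\bar A_n$ but possibly not in the realized set $A_n$, so it cannot be played directly, and a careful replacement is needed that does not spoil the closure condition.

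The plan to overcome this is to recast the lifting entirely inside $V$ as the construction of a winning strategy for II in an auxiliary ``name game'': player I plays $\P$-names $\dot A_n$ forced to witness $x \in \overline{\dot A_n}$, player II plays names $\dot x_n$ with $\Vdash \dot x_n \in \dot A_n$, and II wins if $\Vdash x \in \overline{\{\dot x_n : n < \w\}}$. A winning strategy in this name game yields $\tilde\sigma$ in $V[G]$ by setting $x_n = \dot x_n^G$. To lift $\sigma$ to such a name-game strategy, the idea is to recurse down maximal antichains of $\P$, at each stage applying $\sigma$ to the branch-specific refined hulls $\bar A^q_n = \{y : \exists r \leq q,\ r \Vdash y \in \dot A_n\}$ and assembling $\dot x_n$ piecewise along the antichain. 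The delicate step --- and where I expect the technical core of the argument to lie --- is to show that these piecewise-assembled names satisfy $\Vdash x \in \overline{\{\dot x_n : n < \w\}}$, which requires maintaining coherence of $\sigma$'s outputs across the antichain refinement so that every $U \in \tau_x$ is forced to be hit on every branch.
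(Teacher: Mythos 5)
Your setup is exactly the one the paper uses: the absoluteness of $x\in\cl(A)$ via the ground-model local base, the hulls $\bar A_n$, and the condition-refined hulls $\bar A^q_n=\{y:\exists r\le q\ (r\Vdash y\in\dot A_n)\}$ are precisely the paper's $A^*_{n,p}$. You have also correctly located the obstacle: $\sigma$'s output lies in the hull but need not lie in the realized set. The problem is that the two steps you defer are the entire mathematical content of the theorem, and the first of them hides a genuine circularity that your sketch does not resolve. If you assemble $\dot x_n$ piecewise by assigning the value $y_q:=\sigma(\ldots,\bar A^q_n)$ below each $q$ in a maximal antichain, you do \emph{not} get $\Vdash\dot x_n\in\dot A_n$: membership of $y_q$ in $\bar A^q_n$ only gives you \emph{some} $r\le q$ with $r\Vdash y_q\in\dot A_n$, and if you pass to that $r$ then the relevant hull becomes $\bar A^r_n\sub\bar A^q_n$, on which $\sigma$ may output a different point, which is in turn only forced in by some further extension, and so on. Breaking this regress is exactly the paper's Claim 4.2: working in $V[G]$, one shows by a genericity argument (suppose some $p^*\in G$ forces that no $p\le q_n$ in $\dot G$ works; the output $y$ on $A^*_{n,p^*}$ is forced into $\dot A_n$ by some $p^{**}\le p^*$, and a generic containing $p^{**}$ contradicts $p^*$'s forcing) that there is $p\in G$ with $\sigma(\ldots,A^*_{n,p})\in(\dot A_n)_G$ --- the point is that the condition witnessing membership need not itself be the condition indexing the hull. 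Your antichain version needs the corresponding density statement, with the hull indexed by an intermediate condition $s$ with $r\le s\le q$, and you must then thread these intermediate conditions coherently through the levels of the tree; none of this is in the proposal.

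The second deferred step also requires a real argument, not just bookkeeping: the branch of your antichain tree selected by $G$ is an $\omega$-sequence that need not belong to $V$, so ``$\sigma$ wins along every branch'' (which you can only certify in $V$ for branches in $V$) does not directly yield $\Vdash x\in\cl(\{\dot x_n:n<\w\})$. The paper closes this by contradiction: if some $r$ forces a ground-model open $U\ni x$ to miss all $\dot x_n$, one builds in $V$ a decreasing sequence $\seq{r_n:n<\w}$ below $r$ deciding the auxiliary objects, thereby producing in $V$ a single legal run of $\sigma$ against $\seq{A^*_n:n<\w}$; since $\sigma$ wins it, some output $x_n$ lands in $U$, while $r_n\Vdash\dot x_n=x_n$, a contradiction. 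So your plan is viable and is essentially a $V$-side reorganization of the paper's argument, but as written it asserts the conclusion at the two places where the proof actually happens.
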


The same proof will give the following theorem, which strengthens \cite[Theorem 7.2]{giltonPreservationTopologicalProperties2025}:

\begin{thmb}
Suppose that $(X, \tau)$ is a topological space and player II has a winning strategy in the Rothberger game for $X$. Then this is preserved in any forcing extension of $V$.
\end{thmb}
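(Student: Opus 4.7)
The plan is to mirror the argument for Theorem A: starting from a winning strategy $\sigma \in V$ for player II in the Rothberger game on $(X, \tau)$, I will construct a winning strategy $\sigma^*$ in $V^\P$ by translating each $V^\P$-cover played by I into a ground-model cover via the forcing relation, applying $\sigma$ in $V$, and then translating the output back.

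First, I would reduce to the case where player I plays covers consisting of ground-model basic opens: since every $V^\P$-open is a union of elements of $\tau$, each $V^\P$-cover $\mc{V}_n$ can be replaced by the refinement $\{B \in \tau : B \subseteq V \text{ for some } V \in \mc{V}_n\}$, and II's response to the refined cover determines a response to the original. Second, given a $\P$-name $\dot{\mc{V}}_n$ for I's $n$-th cover, I would consider
\[
\mc{U}_n := \bigl\{U \in \tau : \exists p \in \P,\ p \Vdash_\P U \in \dot{\mc{V}}_n\bigr\},
\]
which lies in $V$; a density argument, using that $\dot{\mc{V}}_n$ is forced to cover $X$, shows that $\mc{U}_n$ is a $V$-cover of $X$. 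Third, I would apply $\sigma$ in $V$ to obtain $U_n := \sigma(\mc{U}_0, \ldots, \mc{U}_n) \in \mc{U}_n$; by the winning property of $\sigma$, we have $\bigcup_n U_n = X$.

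Player II's strategy $\sigma^*$ in $V^\P$ is then: at round $n$, play some $V_n \in \mc{V}_n$ containing (or equal to) $U_n$ if such a $V_n$ exists, and an arbitrary element of $\mc{V}_n$ otherwise. The hard part will be verifying that the resulting sequence $\{V_n : n < \omega\}$ actually covers $X$ in $V^\P$: the condition $p \in \P$ witnessing $U_n \in \mc{U}_n$ need not lie in the generic filter $G$, so the $\sigma$-output $U_n$ may fail to belong to the actual cover $\mc{V}_n$ in the extension. To overcome this, I would pass to a name-level construction in $V$, producing a $\P$-name $\dot{V}_n$ that, over a suitable maximal antichain of conditions deciding $\dot{\mc{V}}_n$, applies $\sigma$ on each piece to the cover decided by that condition and mixes the resulting choices of members of $\dot{\mc{V}}_n$ accordingly. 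The central technical step is to verify that, after this patching, the sequence of II-moves covers $X$ in $V[G]$; this is the delicate point where the full strength of $\sigma$ being a winning strategy (rather than a bare selection principle) is used.
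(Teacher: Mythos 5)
Your overall architecture matches the paper's: project each named cover to a ground-model cover via the forcing relation, run the ground-model strategy $\sigma$ there, and pull the answer back. You also correctly locate the obstruction: the witness $p \Vdash_\P U_n \in \dot{\mc{V}}_n$ need not lie in $G$, so $\sigma$'s output need not be a legal move for II in $V[G]$. But the step you defer as ``the central technical step'' is precisely where the content of the theorem lies, and your sketch of it does not work as stated. First, there is in general no maximal antichain of conditions ``deciding $\dot{\mc{V}}_n$'': a name for an open cover is typically not decided by any single condition, so there is nothing to mix over. Second, even after replacing ``deciding the cover'' by ``localizing the projected cover below a condition,'' i.e.\ working with $\mc{U}_{n,p} := \{U \in \tau : \exists p' \leq p,\ p' \Vdash_\P U \in \dot{\mc{V}}_n\}$, the cover fed to $\sigma$ at round $n$ depends on which conditions were selected at rounds $k<n$; so a single application of $\sigma$ to one fixed sequence $\langle \mc{U}_n : n<\omega\rangle$ computed in $V$, as in your third step, is not available, and the assertion ``$\bigcup_n U_n = X$ by the winning property of $\sigma$'' does not yet apply to anything relevant to the actual play in $V[G]$.

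The paper closes this gap with two arguments you would still need to supply. (1) Working in $V[G]$ and recursively on rounds, it fixes $q_n \in G$ forcing $\dot{\mc{V}}_n$ to be an open cover, and proves by a genericity/contradiction argument (the analogue of Claim~\ref{4.2}) that there is $p \leq q_n$ \emph{in $G$} such that $\sigma(\langle \mc{U}^*_k : k<n\rangle{}^\frown\langle \mc{U}_{n,p}\rangle)$ actually belongs to $\mc{V}_n$; here it is essential that II plays a \emph{single} open set, so that one condition $p' \leq p$ forcing that set into $\dot{\mc{V}}_n$ suffices --- exactly the point that fails for the Menger game. (2) To verify that the resulting strategy is winning, one cannot quote the winning property of $\sigma$ for a run fixed in $V$, since the run of $\sigma$ now depends on $G$; instead one assumes some $r$ forces a point of $X$ to be uncovered, builds a decreasing sequence $r_n \leq r$ deciding the auxiliary names and satisfying $r_n \leq p_n$, and extracts from this a genuine ground-model run of $\sigma$ against $\langle \mc{U}^*_n : n<\omega\rangle$ whose outcome contradicts the choice of $r$. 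Both steps are absent from your proposal, and they are the proof.
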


The crucial difference between the games mentioned in Theorems A and B and the closely related 
countable fan tightness and Menger games lies in the fact that, in the former games, player II 
plays a single point or open set, respectively, on each move, whereas in the latter, they play 
a collection of points or open sets of arbitrary finite size on each move. As we will see, due 
to this difference, the analogues of Theorems A and B for the countable fan tightness and Menger 
games provably fail.

\section{Preliminaries}

If $(X,\tau)$ is a topological space and $\P$ is a forcing notion, then we let $(X,\tau^\P)$ denote (a name for) the space with underlying set $X$ and topology generated by $\tau$ in the generic extension $V^\P$. We will abuse notation and speak of ``the space $X$'', with the understanding that the model in which the topological properties are evaluated can be deduced from context. For example, when we write ``$\Vdash_\P X$ has property $\Phi$'', we really mean $X$ as computed in the $\P$-generic extension, i.e. that it is forced that $(X,\tau^\P)$ has property $\Phi$. One way around these technicalities, suggested by Dow in \cite{dowTwoApplicationsReflection1988}, is to redefine the notion of a topological space to mean a pair $(X,\mathcal{B})$, where $\mathcal{B}$ is a basis for a (unique) topology on $X$. Being a basis is absolute for transitive models of $\ZF$, which renders mute the notational considerations above, but we will follow the more common, if slightly ambiguous, terminology and definitions.

If $\kappa$ and $\lambda$ are cardinals, with $\kappa$ infinite, then $\Add(\kappa,\lambda)$ is the poset for adding $\lambda$-many Cohen subsets of $\kappa$. More precisely, $\Add(\kappa,\lambda)$ consists of the set of partial functions $\lambda \times\kappa \rightharpoonup 2$ of size less than $\kappa$. We order $\Add(\kappa,\lambda)$ by (reverse) inclusion.

Let $\P$ be a forcing poset. We will say that a regular cardinal $\theta$ is \emph{sufficiently large} (for $\P$) iff $\power(\P)\in H_\theta$. If other objects, like a topological space $(X,\tau)$, are at play, the phrase ``sufficiently large'' will also imply that these objects belong to $H_\theta$. 

The following definition goes back to Mitchell in \cite{mitchellAddingClosedUnbounded2005}:

\begin{definition}\label{stronglyProper}
Let $\P$ be a forcing poset and $\theta$ a sufficiently large regular cardinal. Given a well-ordering $\triangleleft$ of $H_\theta$, a model $M\prec (H_\theta,\in,\triangleleft)$ and a condition $q\in \P$, we say that $q$ is \emph{strongly $(M,\P)$-generic} (also called a \emph{strong master condition for $M$}) iff for every set $D\subset \P\cap M$ which is dense in the poset $\P \cap M$, $q\Vdash_\P \check D \cap \dot G\neq\emptyset$, where $\dot G$ is the canonical $\P$-name for the generic filter. When the model $M$ and/or the poset $\P$ are clear from context, we will drop reference to them.

Given $M\prec H_\theta$, we say that $\P$ is \emph{$M$-strongly proper} iff for all $p\in \P\cap M$ there exists $q\in \P$ so that $q\le p$ and $q$ is strongly $(M,\P)$-generic.

Given a non-empty collection $\mathcal{S}$ of elementary submodels of $H_\theta$ with $\P\in\bigcap\mathcal{S}$, we say that $\P$ is \emph{$\mathcal{S}$-strongly proper} iff $\P$ is $M$-strongly proper for all $M\in\mathcal{S}$.
\end{definition}

For more on strong properness, in particular its comparison with Shelah's notion of properness, see \cite[Section 3]{neemanForcingSequencesModels2014} and \cite[Section 3]{giltonPreservationTopologicalProperties2025}.

Recall that a topological space $(X,\tau)$ is a \emph{Lindel\"of space} iff every open cover has a countable subcover. We will be concerned with the following two strengthenings of the Lindel\"of property:

\begin{itemize}
\item A topological space $(X,\tau)$ has the \emph{Rothberger property} (or simply ``is Rothberger'') iff for every sequence $\seq{\U_n:n<\omega}$ of open covers of $X$, there exists a sequence $\seq{U_n:n<\omega}$ with $U_n\in\U_n$ for every $n\in\omega$ so that $\{U_n:n<\omega\}$ covers $X$.
\item A topological space $(X,\tau)$ has the \emph{Menger property} (or simply ``is Menger'') iff for every sequence $\seq{\U_n:n<\omega}$ of open covers of $X$, there exists a sequence $\seq{\mathcal{F}_n:n<\omega}$ with $\mathcal{F}_n\in [\U_n]^{<\omega}$ for every $n\in\omega$ so that $\{\bigcup\F_n:n<\omega\}$ covers $X$.\footnote{for a set $A$, $[A]^{<\omega}$ denotes the family of finite subsets of $A$.}
\end{itemize}

Clearly, every compact space is Menger, but being Rothberger is a stronger property:

\begin{example}
Let $X$ be the closed unit interval with the Euclidean topology. Choose a sequence $\seq{r_n:n<\omega}$ of positive numbers with $\sum_n r_n<1$. For each $n\in\omega$, let $\U_n$ be the collection of open subsets of $X$ which have Lebesgue measure at most $r_n$. Clearly, $\U_n$ covers $X$. If $U_n\in\U_n$ for every $n$, then $\bigcup_n U_n$ has measure at most $\sum_n r_n<1$, hence does not cover $X$.
Thus, $X$ is compact but not Rothberger.
\end{example}

\begin{definition}\label{fan}
Let $(X,\tau)$ be a topological space and $x\in X$. We say that
\begin{itemize}
\item $(X,\tau)$ has \emph{strong countable fan tightness at $x$} iff for every sequence $\seq{A_n:n<\omega}$ of subsets of $X$ with $x\in\cl(A_n)$ for every $n\in\omega$, there exists a sequence $\seq{x_n:n<\omega}$ with $x_n\in A_n$ for every $n<\omega$ and $x\in \cl(\{x_n:n<\omega\})$.
\item $(X,\tau)$ has \emph{countable fan tightness at $x$} iff for every sequence $\seq{A_n:n<\omega}$ of subsets of $X$ with $x\in\cl(A_n)$ for every $n\in\omega$, there exists a sequence $\seq{F_n:n<\omega}$ with $F_n\in [A_n]^{<\w}$ for every $n<\omega$ and $x\in \cl(\bigcup_n F_n)$.
\end{itemize}
\end{definition}

We will now introduce a series of game analogues of the aforementioned topological properties. For a survey on the topic of topological games, see \cite{telgarskyTopologicalGames50th1987}.

If $(X,\tau)$ is a topological space and $x\in X$, then the \emph{strong countable fan tightness game for $X$ at the point $x$} is a two-player game of length $\omega$ that proceeds as follows. In each round, player I first plays a set $A_n \subseteq X$ such that $x \in \mathrm{cl}(A_n)$; then player II plays a point $x_n \in A_n$. Player II wins iff $x \in \mathrm{cl}(\{x_n :n <\omega\})$; otherwise, player I wins.

If $(X,\tau)$ is a topological space and $x\in X$, then the \emph{countable fan tightness game for $X$ at the point $x$} is a two-player game of length $\omega$ that proceeds as follows. In each round, player I first plays a set $A_n \subseteq X$ such that $x \in \mathrm{cl}(A_n)$; then player II plays a finite set $F_n\subseteq A_n$. Player II wins iff $x \in \mathrm{cl}(\bigcup_{n<\omega} F_n)$; otherwise, player I wins.

If $(X,\tau)$ is a topological space, then the \emph{Rothberger game for $X$} is a two-player game of length $\omega$ that proceeds as follows. In each round, player I first plays an open cover $\mathcal{U}_n$ of $X$; then player II plays an open set $U_n\in \mathcal{U}_n$. Player II wins iff $\{U_n:n<\omega\}$ covers $X$; otherwise player I wins.

If $(X,\tau)$ is a topological space, then the \emph{Menger game for $X$} is a two-player game of length $\omega$ that proceeds as follows. In each round, player I first plays an open cover $\mathcal{U}_n$ of $X$; then player II plays a finite subset $\mathcal{F}_n \subset \mathcal{U}_n$. Player II wins iff $\{\bigcup \mathcal{F}_n:n<\omega\}$ covers $X$; otherwise player I wins.

We have the following well known fact.

\begin{lemma}\label{game}
Let $(X,\tau)$ be a topological space. Suppose that player II has a winning strategy in the Rothberger (respectively Menger, strong countable fan tightness, countable fan tightness) game. Then $(X,\tau)$ is Rothberger (respectively is Menger, has strong countable fan tightness, has countable fan tightness).
\end{lemma}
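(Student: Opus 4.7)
The plan is to observe that each of the four topological properties in question is exactly the statement that player II can find a good response to an arbitrary sequence of player I moves; having a winning strategy is \emph{a fortiori} sufficient, since a strategy must succeed against every play by player I. Thus the proof in each case is nothing more than simulating a run of the appropriate game in which player I plays the sequence from the selection principle.

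Concretely, for the Rothberger case, let $\sigma$ be a winning strategy for player II. Given an arbitrary sequence $\seq{\mathcal{U}_n:n<\omega}$ of open covers of $X$, recursively define a play of the Rothberger game by letting player I play $\mathcal{U}_n$ in round $n$ and letting player II respond with the open set $U_n\in\mathcal{U}_n$ dictated by $\sigma$ (i.e., $U_n=\sigma(\mathcal{U}_0,\ldots,\mathcal{U}_n)$ in whatever formalization of strategies one prefers). Since $\sigma$ is winning, $\{U_n:n<\omega\}$ covers $X$, and this is precisely the Rothberger selection property for the given sequence.

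The other three cases are handled by the same device. In the Menger case, player II's winning response $\mathcal{F}_n\in[\mathcal{U}_n]^{<\w}$ yields a sequence with $\{\bigcup\mathcal{F}_n:n<\omega\}$ covering $X$. In the (strong) countable fan tightness cases, we instead feed player I an arbitrary sequence $\seq{A_n:n<\w}$ of subsets of $X$ with $x\in\cl(A_n)$, and read off from $\sigma$ either points $x_n\in A_n$ with $x\in\cl(\{x_n:n<\w\})$ or finite subsets $F_n\in[A_n]^{<\w}$ with $x\in\cl(\bigcup_nF_n)$, as required.

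There is no serious obstacle here; the lemma is a direct unpacking of definitions. The only mild subtlety is formal: one must be confident that the sequence of player I moves assembled from the given selection-principle data is a legal sequence of moves in the game (which it is, by assumption on the $\mathcal{U}_n$ or $A_n$), so that $\sigma$ is defined on it and the winning condition applies.
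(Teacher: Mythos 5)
Your proof is correct and follows essentially the same approach as the paper: both simulate a run of the game in which player I plays the given sequence from the selection principle and player II responds via the winning strategy, then read off the winning condition. The only cosmetic difference is that the paper writes out the Menger case and you write out the Rothberger case, with the others left as analogous.
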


\begin{proof}
We prove the lemma for the Menger property, the other cases are analogous. Let $\sigma$ be a winning strategy for player II in the Menger game. Given $\seq{\U_n:n<\omega}$ a sequence of open covers of $X$, consider the run of the game where I plays $\U_n$ on round $n$. Let $\F_n$ be the play by II at round $n$ according to $\sigma$. Then $\bigcup_n\F_n$ covers $X$, because $\sigma$ is winning, and so $X$ is Menger.
\end{proof}

For any undefined set-theoretic notions, see \cite{jechSetTheory2003} or \cite{kunenSetTheory2011}. For any undefined topological notions, see \cite{engelkingGeneralTopology1989}.

\section{Strong Properness}

The following proposition was proven by the second author in his PhD thesis (see \cite[Theorem 3.32]{marunSpacesTreesLabelled2024}), and independently and almost at the same time by Gilton and Holshouser (see \cite[Theorem 4.6]{giltonPreservationTopologicalProperties2025}).

\begin{proposition}\label{strong}
Let $\P$ be a forcing notion, $(X,\tau)$ a topological space, and $\theta$ a sufficiently large 
regular cardinal. Let $\mathcal{S}$ be a stationary subset of $[H_\theta]^\omega$ and suppose that $\P$ is $\mathcal{S}$-proper. If $(X,\tau)$ is Lindel\"of, then $\Vdash_\P X$ is Lindel\"of.
\end{proposition}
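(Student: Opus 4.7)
The plan is: given a $\P$-name $\dot{\U}$ for an open cover of $X$ in $V^\P$, produce a countable subcover using a strong master condition for a suitable $M \in \mathcal{S}$.

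First, I reduce to the case where $\dot{\U}$ is a name for a cover by ground-model basic opens. Since every open set in $\tau^\P$ is a union of elements of $\tau$, the refinement
\[
\dot{\U}' := \{U \in \tau : \exists V \in \dot{\U},\, U \subseteq V\}
\]
is a name for such a cover, and any countable subcover of $\dot{\U}'[G]$ yields one of $\dot{\U}[G]$ by replacing each $U$ with a containing $V \in \dot{\U}[G]$. So assume from now on that $\dot{\U}$ is a name for a cover by elements of $\tau$.

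Now fix $p_0 \in \P$ and, using stationarity of $\mathcal{S}$, pick $M \in \mathcal{S}$ with $\P, \dot{\U}, X, \tau, p_0 \in M$; let $q \leq p_0$ be a strong master condition for $M$. For any $\P$-generic $G$ with $q \in G$, define in $V[G]$
\[
\U^* := \{U \in \tau \cap M : \exists p \in G \cap M,\, p \Vdash U \in \dot{\U}\}.
\]
Then $\U^*$ is countable (as a subset of $M$) and $\U^* \subseteq \dot{\U}[G]$ by construction, so it suffices to show $\U^*$ covers $X$. For each $x \in X$, set
\[
D_x := \{p \in \P \cap M : \exists U \in \tau \cap M,\, x \in U \wedge p \Vdash U \in \dot{\U}\},
\]
a set lying in $V$. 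If $D_x$ is dense in $\P \cap M$, then strong $(M,\P)$-genericity of $q$ yields $G \cap D_x \neq \emptyset$, placing $x$ inside some member of $\U^*$. So everything reduces to density of each $D_x$ in $\P\cap M$.

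The main obstacle is that $x$ need not belong to $M$, blocking a direct appeal to elementarity with parameter $x$. I plan to bypass this using the Lindel\"ofness of $X$ in $V$. Fix $p_1 \in \P \cap M$. In $V$, for each $y \in X$ there exist $r \leq p_1$ and $U \in \tau$ with $y \in U$ and $r \Vdash U \in \dot{\U}$ (by the density argument applied to $y$ and $p_1$), so the resulting collection of $U$'s is an open cover of $X$; by Lindel\"ofness there is a sequence $\seq{(r_n, W_n) : n < \omega}$ with $r_n \leq p_1$, $W_n \in \tau$, $r_n \Vdash W_n \in \dot{\U}$, and $X \subseteq \bigcup_n W_n$. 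This existence assertion is first-order over $H_\theta$ in the parameters $p_1, \P, \dot{\U}, \tau, X$, all in $M$, so by elementarity such a sequence may be chosen inside $M$, giving $r_n \in \P \cap M$ and $W_n \in \tau \cap M$ for all $n$. For arbitrary $x \in X$, picking $n$ with $x \in W_n$ yields $r_n \in D_x$ below $p_1$, establishing density and completing the proof.
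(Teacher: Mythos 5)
Your argument is correct, and it is essentially the standard proof of this fact: the paper itself does not reprove the proposition but cites it from Marun's thesis and from Gilton--Holshouser, where the same strategy is used (reduce to covers by ground-model open sets, take $M\in\mathcal{S}$ containing the name, and verify density of the sets $D_x$ in $\P\cap M$ by combining ground-model Lindel\"ofness with elementarity to handle points $x\notin M$). The only cosmetic point is that you should note, via the usual mixing argument, that one may assume $\dot{\U}$ is forced by the trivial condition to be an open cover, so that the density argument applies below an arbitrary $p_1\in\P\cap M$.
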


The following lemma appeared, in greater generality, in \cite[Lemma 1.12]{giltonSIDECONDITIONSITERATION2016}. The proof presented there has a very small gap, so we will use this opportunity to reprove the case that is of interest to us:

\begin{lemma}[Gilton-Neeman]\label{GN}
Let $\kappa\ge\omega_1$ and set $\P:=\Add(\omega,\kappa)$. Let $\dot\Q$ be a $\P$-name for a $\sigma$-closed forcing. Let $\theta$ be a sufficiently large (for $ \P\ast\dot\Q$) regular cardinal and $M\prec H_\theta$ be countable. Then the two-step iteration $\P\ast\dot\Q$ is $M$-strongly proper.
\end{lemma}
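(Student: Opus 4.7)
The plan is to produce, for each countable $M \prec (H_\theta, \in, \triangleleft)$ containing the relevant objects and each $(p, \dot q) \in (\P \ast \dot\Q) \cap M$, a strong $(M, \P \ast \dot\Q)$-master condition $(p, \dot q^*) \le (p, \dot q)$. The first coordinate will need no strengthening: since $\P = \Add(\omega, \kappa)$ consists of finite partial functions, I would use the restriction map $r \mapsto r \restr (M \cap \kappa)$ as a Mitchell-style projection to show that $p$ itself is already a strong $(M, \P)$-master condition. Indeed, for any $r \le p$ in $\P$ and any $s \le r \restr M$ in $\P \cap M$, the union $r \cup s$ is a common extension in $\P$ because $r$ and $s$ agree on their common domain, which lies in $M \cap \kappa$. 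Consequently, $p$ forces that $\dot G_\P \cap M$ meets every dense subset of $\P \cap M$ lying in $V$, not merely those lying in $M$.

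I would then build $\dot q^*$ using the $\sigma$-closedness of $\dot\Q$. In $V$, enumerate the dense subsets of $(\P \ast \dot\Q) \cap M$ that lie in $M$ as $\{E_n : n < \omega\}$. Setting $\dot q_0 := \dot q$, I define $\P$-names $\dot q_n \in M$ recursively: at stage $n$, the projection $E_n^{\P} := \{t \in \P \cap M : \exists \dot w \in M,\ (t, \dot w) \in E_n \text{ and } t \Vdash \dot w \le \dot q_{n-1}\}$ is dense in $\P \cap M$ below $p$, so the strong $(M, \P)$-genericity of $p$ produces a $\P$-name $\dot p_n \in M$ with $p \Vdash \dot p_n \in \dot G_\P \cap E_n^{\P}$; I then take $\dot q_n \in M$ so that $p \Vdash (\dot p_n, \dot q_n) \in E_n$ and $p \Vdash \dot q_n \le \dot q_{n-1}$ (using $\triangleleft$ to make $\dot q_n$ canonical). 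Let $\dot q^*$ be a $\P$-name for a lower bound of $\{\dot q_n : n < \omega\}$, which exists because $\Vdash_\P \dot\Q$ is $\sigma$-closed.

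It remains to check that $(p, \dot q^*)$ is strongly $(M, \P \ast \dot\Q)$-generic. For $D \in M$ dense, this is automatic: $D = E_n$ for some $n$, and since $p \Vdash \dot p_n \in \dot G_\P$ while $p \Vdash \dot q^* \le \dot q_n$, any $(r, \dot s) \le (p, \dot q^*)$ already forces $(\dot p_n, \dot q_n) \in D \cap \dot G_{\P \ast \dot\Q}$. For $D \notin M$, the first-coordinate projection $D_\P := \{t \in \P \cap M : \exists \dot w \in M,\ (t, \dot w) \in D\}$ is still dense in $\P \cap M$; strong $\P$-genericity of $p$ over $V$ then yields, for any $(r, \dot s) \le (p, \dot q^*)$, a witness $t \in D_\P$ with $r \cup t$ a common $\P$-extension of $r$ and $t$, together with some $\dot w \in M$ so that $(t, \dot w) \in D$. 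It then remains to exhibit a $\dot\Q$-name $\dot v$ with $r \cup t \Vdash_\P \dot v \le \dot s, \dot w$ in $\dot\Q$.

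I expect the main obstacle to be precisely this last matching step for $D \notin M$. The master-condition structure of $\dot q^*$ guarantees $M[\dot G_\P \cap M]$-genericity of the $\dot\Q$-generic only for dense sets internal to $M[\dot G_\P \cap M]$, whereas the dense subset of $\dot\Q \cap M[\dot G_\P \cap M]$ extracted naturally from $D$, namely $\{\dot w^{G_\P} : \exists t \in \dot G_\P \cap M,\ (t, \dot w) \in D\}$, need not lie in $M[\dot G_\P \cap M]$. To bridge the gap I would try to leverage the finitary structure of $\Add(\omega, \kappa)$: by further strengthening the $\P$-coordinate of $r$ one can reshape $\dot G_\P \cap M$, and hence the family of candidate names $\dot w$, reducing the compatibility question to an $M[\dot G_\P \cap M]$-internal dense set which the master condition $\dot q^*$ is guaranteed to handle.
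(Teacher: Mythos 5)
Your treatment of the first coordinate is fine, but the overall argument has a genuine gap, and it is exactly the one you flag at the end. Strong $(M,\P\ast\dot\Q)$-genericity, as defined in this paper, requires meeting \emph{every} dense subset $D$ of the trace poset $(\P\ast\dot\Q)\cap M$, not only those that are elements of $M$. There are far more than countably many such $D$, so they cannot all be folded into a single $\omega$-length fusion: your $\dot q^*$ is a fixed name built from the fixed list $\{E_n : n<\omega\}$ of $M$-internal dense sets, and for a dense $D\notin M$ there is no reason that $\dot q^*$ (or a given $\dot s$ below it) is forced to be compatible with the second coordinate $\dot w$ of any $(t,\dot w)\in D$ whose first coordinate is compatible with $r$. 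Your closing suggestion---reshape the $\P$-generic over $M$ so as to reduce to an $M$-internal dense set---does not repair this, because $\dot q^*$ as you have built it has no mechanism by which the $\P$-part of a condition can influence which names in $M$ it is forced to lie below.

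The paper's proof supplies exactly that mechanism, and this is the essential idea your proposal is missing. One fixes a surjection $\phi:\omega\twoheadrightarrow M$ and a coordinate $\alpha\in\omega_1\setminus M$, and defines the descending sequence \emph{generically}: $\dot q_{n+1}$ is $\phi(\dot f(n))$ whenever that is a legal extension of $\dot q_n$ (and is $\dot q_n$ otherwise), where $\dot f$ is the Cohen real at coordinate $\alpha$. Then, given an arbitrary dense $D\subseteq(\P\ast\dot\Q)\cap M$ and $(s,\dot r)\le(p_0,\dot q_\omega)$, one finds $(t,\dot u)\in D$ below $(s\cap M,\dot q_n')$ and extends $s\cup t$ at the single cell $(\alpha,n)$ to the value $m$ with $\phi(m)=\dot u$; since $\alpha\notin M$ this does not disturb membership of the first coordinate's trace in $M$, and it forces $\dot q_{n+1}=\dot u$, hence $\dot r\le\dot q_\omega\le\dot u$. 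The point is that the elements of $D$ all lie in $M$ even when $D$ itself does not, so $\phi$ always reaches the needed name. This ``steering by the generic'' is what lets a single condition anticipate uncountably many dense sets, and without it (or something equivalent) your $D\notin M$ case cannot be closed.
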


\begin{proof}
Fix a surjection $\phi:\w \twoheadrightarrow M$ and a condition $(p_0,\dot{q}_0)\in M\cap (\P\ast\dot\Q)$. Choose an ordinal $\alpha\in \omega_1\setminus M$ and let $\dot f$ be a $\P$-name for the $\alpha$th real added by $\P$, i.e. $\Vdash_\P \dot f = \bigcup \{p(\alpha,\cdot): p\in \dot G\}$, where $\dot G$ is a name for the $\P$-generic filter.

Recursively define $\seq{\dot q_i:i<\omega}$ as follows (with $\dot q_0$ already defined): given $\dot q_i$, by the maximum principle we can find a name $\dot q_{i+1}$ for a condition in $\dot\Q$ so that
\[
\Vdash_\P \left[\left(\phi(\dot f (i) )\le \dot q_i \to \dot q_{i+1}= \phi(\dot f(i))\right )\wedge \left(\phi(\dot f (i) )\not\le \dot q_i \to \dot q_{i+1}= \dot q_i\right )\right].
\]
Note that, for every $i<\omega$, we have that $\Vdash \dot q_i \in M[\dot G]$, but the names $\dot q_i$ themselves need not belong to $M$. However, we have a `local' version of this:
\begin{claim}
For every $n\in\omega$ and every $p\in\P$ with $\dom(p(\alpha,\cdot)) \supseteq n$, there exists a $\P$-name $\dot q\in M$ for a condition in $\dot \Q$ such that $p\Vdash \dot q=\dot q_n$.
\end{claim}

\begin{proof}
The proof is by induction on $n$, with the case $n=0$ being trivial. Suppose the lemma holds for $ n$ and let $p\in\P$ be such that $n+1\subset \dom(p(\alpha,\cdot))$. By the induction hypothesis, we can find $\dot q_n'\in M$, a $\P$-name for a condition in $\dot\Q$, so that $p\Vdash {\dot{q}}_n'=\dot{q}_n$. Since $n\in \dom(p(\alpha,\cdot))$, letting $m:=p(\alpha,n)$ we have that $p\Vdash \dot f(n)=m$. Note that
\[
\Vdash_\P \exists q\in\dot \Q\left[\left(\phi(m)\le \dot q_n'\to \dot q = \phi(m)\right) \wedge \left(\phi(m)\not\le \dot q_n' \to \dot q = \dot q_n'\right)\right].
\]
Since all the parameters belong to $M$, by elementarity we can find $\dot q'_{n+1}\in M$ so that $\Vdash_\P \dot q'_{n+1}\in\dot\Q$ and
\[
\Vdash_\P \left[\left(\phi(m)\le \dot q'_n\to \dot q'_{n+1} = \phi(m)\right) \wedge \left(\phi(m)\not\le \dot q'_n \to \dot q'_{n+1} = \dot q_n'\right)\right].
\]
Since $p\Vdash \dot q'_{n}=\dot q_n$, it follows that $p\Vdash \dot q'_{n+1}=\dot q_{n+1}$.
\end{proof}
The sequence of names $\seq{\dot q_i:i<\omega}$ is forced to be decreasing, so by countable closure we can find a $\P$-name for a condition $\dot q_\omega$ forced to be a lower bound of $\seq{\dot q_i:i<\w}$. We will argue that $(p_0,\dot q_\w)$ is strongly generic for $M$. To see this, fix $D$ a dense subset of $(\P\ast\dot\Q)\cap M$, we will argue that every extension of $(p_0,\dot q_\w)$ can be further extended to force $D \cap \dot K\neq\emptyset$, where $\dot K$ is a name for the $\P\ast\dot\Q$-generic filter. Fix therefore $(s,\dot r)\le (p_0,\dot q_\w)$ and set $s_0:=s\cap M$, so that $s_0\in M$ by elementarity. Applying the claim to $s$ and $n:=\min\{k\in\omega:(\alpha,k)\not\in\dom(s)\}$, we can find $\dot q'_{n}\in M$ a $\P$-name for an element of $\dot\Q$ which satisfies $s\Vdash \dot q'_n=\dot q_n$. In particular, $(s_0,\dot q'_n)$ is a condition in $(\P\ast\dot\Q)\cap M$, so, by the density of $D$, we can find $(t,\dot u)\in D$ with $(t,\dot u)\le (s_0,\dot q'_{n})$. Since $\phi$ is a surjection, we can fix $m\in\w$ with $\phi(m)=\dot u$. 

Set $s':=s\cup t \cup \{(\alpha,n,m)\}$. Since $t\le s_0=s\cap M$ and $(\alpha,n)\not\in\dom(s)$, $s'$ is a condition extending $s$. Therefore, $(s',\dot r)$ is a condition in $\P\ast\dot\Q$. We claim that it, moreover, extends $(t,\dot u)$. Obviously, $s'\le t$, so we only need to argue that $s'\Vdash \dot r\le \dot u$. Note that
\begin{itemize}
\item $s'\Vdash \dot f(n)=m$;
\item $t\Vdash \dot u \le \dot q_n'$;
\item $s\Vdash \dot q_n'=\dot q_n$;
\end{itemize} 
so, by the choice of $m$,
\[
s'\Vdash \phi(f(n))=\phi(m)=\dot u \le \dot q_n.
\]
Therefore, $s'\Vdash \dot q_{n+1}=\dot u$, and hence
\[
s'\Vdash \dot r \le \dot q_\omega\le \dot q_{n+1} = \dot u.
\]
We have shown that $(s',\dot r)\le (t,\dot u)$. Since $(t,\dot u)\in D$, in particular $(s',\dot r)\Vdash D\cap \dot K\neq\emptyset$, as desired.
\end{proof}

Combining Proposition \ref{strong} and Lemma \ref{GN}, we immediately obtain the following result, first proved by Dow in \cite[Lemma 3.3]{dowTwoApplicationsReflection1988}, and later strengthened (by a different method) by Scheepers and Tall in \cite[Corollary 12]{scheepersLindelofIndestructibilityTopological2010}

\begin{corollary}[Dow]\label{dow}
Let $(X,\tau)$ be a Lindel\"of space. Then, in $V^{\Add(\omega,\omega_1)}$, $X$ is a Lindel\"of space which is moreover indestructible under countably closed forcing.
\end{corollary}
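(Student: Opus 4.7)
The strategy is to assemble Proposition~\ref{strong} and Lemma~\ref{GN} with essentially no additional work. The key observation is that Lemma~\ref{GN} allows the second factor of the iteration to be trivial, and that the collection $\mathcal{S}$ of countable elementary submodels $M \prec (H_\theta, \in, \triangleleft)$ forms a club (and thus a stationary set) in $[H_\theta]^\omega$ for any sufficiently large $\theta$ and any well-ordering $\triangleleft$ of $H_\theta$.

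First I would establish that $X$ is Lindel\"of in $V^{\Add(\omega,\omega_1)}$. Set $\P := \Add(\omega,\omega_1)$ and let $\dot\Q$ be a $\P$-name for the trivial forcing; then $\P \ast \dot\Q$ is forcing-equivalent to $\P$, and Lemma~\ref{GN} (applied with $\kappa = \omega_1$) yields that $\P$ is $M$-strongly proper for every countable $M \prec H_\theta$. With $\mathcal{S}$ the club described above, $\P$ is $\mathcal{S}$-strongly proper, and Proposition~\ref{strong} gives $\Vdash_\P$ ``$X$ is Lindel\"of''.

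For indestructibility, I would work inside $V^{\P}$ and consider an arbitrary countably closed forcing there. Any such forcing can be represented by a $\P$-name $\dot\Q$ that is forced to be $\sigma$-closed, so Lemma~\ref{GN} applies directly to the two-step iteration $\P \ast \dot\Q$, showing that it is $\mathcal{S}$-strongly proper. A second application of Proposition~\ref{strong} then gives $\Vdash_{\P \ast \dot\Q}$ ``$X$ is Lindel\"of''. Unfolding the two-step iteration, this says precisely that in $V^{\P}$, forcing with $\dot\Q$ preserves the Lindel\"ofness of $X$, which is the required indestructibility.

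I do not anticipate any real obstacle here, as the corollary is essentially immediate from the two cited ingredients; the only care needed is the routine translation between a two-step iteration $\P \ast \dot\Q$ viewed from $V$ and the corresponding forcing $\dot\Q^{\dot G}$ viewed from $V^\P$.
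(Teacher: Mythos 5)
Your proposal is correct and follows essentially the same route as the paper: Lemma~\ref{GN} gives strong properness of the iteration for (a club of) countable models, and Proposition~\ref{strong} is applied twice, once to $\Add(\omega,\omega_1)$ itself and once to $\Add(\omega,\omega_1)\ast\dot\Q$. The only cosmetic difference is that you obtain strong properness of $\Add(\omega,\omega_1)$ alone by taking $\dot\Q$ trivial in Lemma~\ref{GN}, whereas the paper simply cites it as a standard fact; both are fine.
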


\begin{proof}
The fact that $X$ is Lindel\"of in $V^{\Add(\omega,\omega_1)}$ follows by Proposition \ref{strong}, because $\Add(\omega,\omega_1)$ is strongly proper for every countable $M\prec H_\theta$.

If $\dot\Q$ is an $\Add(\omega,\omega_1)$-name for a countably closed forcing, then the two-step iteration $\Add(\omega,\omega_1)\ast \dot\Q$ is strongly proper for stationarily many countable models by Lemma \ref{GN}, so the conclusion follows by Proposition \ref{strong}.
\end{proof}

In \cite[Problem 2]{scheepersLindelofIndestructibilityTopological2010}, Scheepers and Tall ask whether adding a single Cohen real renders every ground model Lindel\"of space indestructible. We do not know the answer to this question, but we can show that the strong properness machinery is not enough to solve this problem. Indeed, not only was $\kappa\ge\omega_1$ key in the proof of Lemma \ref{GN}, the statement is false if $\kappa\le\omega$, as the next lemma shows:

\begin{lemma}\label{one_real}
Let $\R:=\Add(\w,1)\ast\dot{\Add}(\w_1,1)$, $\theta$ a sufficiently large regular cardinal, and $M\prec H_\theta$ countable. Then no condition in $\R$ is strongly generic for $M$.
\end{lemma}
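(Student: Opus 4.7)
The plan is, for an arbitrary condition $(p_0, \dot q_0) \in \R$, to exhibit a set $D \subseteq \R \cap M$ which is dense in $\R \cap M$ together with an extension $(p_0, \dot q^*) \le (p_0, \dot q_0)$ such that $(p_0, \dot q^*) \Vdash D \cap \dot G = \emptyset$; this shows $(p_0, \dot q_0)$ is not strongly $(M, \R)$-generic. Setting $\delta := M \cap \omega_1$ (a countable infinite ordinal with $\delta \subseteq M$ but $\delta \notin M$), the crucial preparatory observation is that any $\Add(\omega, 1)$-name $\dot q \in M$ for a condition in $\dot\Add(\omega_1, 1)$ satisfies $\dom(\dot q^{G_0}) \subseteq \delta$ for every $\Add(\omega, 1)$-generic $G_0$. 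Indeed, since $\Add(\omega, 1) \subseteq M$ is a countable poset, every name in $M$ for an ordinal below $\omega_1$ takes its possible values in $M$, giving $M[G_0] \cap \omega_1 = \delta$; the countability of $\dom(\dot q^{G_0})$ in $M[G_0]$ then forces its elements into $M[G_0] \cap \omega_1 = \delta$.

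Fix a bijection $\phi : \omega \to \delta$ in $V$ (note $\phi \notin M$, since otherwise $\delta = \phi[\omega] \in M$), and define
\[ D := \{(p, \dot q) \in \R \cap M : \exists n \in \dom(p),\ p \Vdash \dot q(\phi(n)) = 1 - p(n)\}. \]
Density of $D$ in $\R \cap M$ is verified by elementarity: given $(p_1, \dot q_1) \in \R \cap M$, one picks $n \in \omega \setminus \dom(p_1)$, sets $\alpha := \phi(n) \in \delta \subseteq M$, and uses elementarity in $M$ to find $(p'', \dot q'') \le (p_1, \dot q_1)$ with $\dot q'' \in M$, $n \notin \dom(p'')$, and $p''$ deciding $\dot q''(\alpha) = b$ for some $b \in \{0, 1\}$; then $(p'' \cup \{(n, 1-b)\}, \dot q'') \in D \cap M$ extends $(p_1, \dot q_1)$.

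For the witness, one defines $\dot q^*$ as an $\Add(\omega, 1)$-name for an extension of $\dot q_0$ satisfying $\dot q^*(\phi(n)) = \dot r(n)$ for every $n$ (where $\dot r$ is the Cohen name). For any $(p, \dot q) \in D$ with witness $n$ compatible with $(p_0, \dot q^*)$, one then has $p \cup p_0 \Vdash \dot q(\phi(n)) = 1 - p(n) = 1 - \dot r(n)$ by definition of $D$ and $p \cup p_0 \Vdash \dot q^*(\phi(n)) = \dot r(n)$ by construction, giving incompatibility of $\dot q$ with $\dot q^*$. The main obstacle is that the prescription $\dot q^*(\phi(n)) = \dot r(n)$ is consistent with $\dot q^* \le \dot q_0$ only when $\phi(n)$ is forced to lie outside $\dom(\dot q_0^{G_0})$; handling this in full generality requires either a preliminary strengthening step bounding $\dom(\dot q_0) \cap \delta$ by a fixed $V$-subset of $\delta$, or replacing $\phi$ with a $\P$-name $\dot\phi$ enumerating $\delta \setminus \dom(\dot q_0^{G_0})$ (forced to be countably infinite) and adapting the density argument accordingly via elementarity applied to the forced values of $\dot\phi(n)$.
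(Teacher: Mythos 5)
Your overall plan --- produce a dense $D \subseteq \R\cap M$ together with an extension of the given condition forcing $D\cap\dot G=\emptyset$ --- is exactly the paper's, and the diagonalization idea is in the right spirit. But there is a genuine gap, and it is precisely the obstacle you flag at the end without resolving: your construction of $\dot q^*$ needs infinitely many coordinates $\phi(n)\in\delta$ forced to lie outside $\dom(\dot q_0^{G_0})$, and such coordinates need not exist. For instance, $\dot q_0$ may be the check name of a ground-model condition with domain $\delta+1$, in which case $\delta\setminus\dom(\dot q_0^{G_0})=\emptyset$. This defeats both of your proposed repairs: bounding $\dom(\dot q_0)\cap\delta$ by a fixed $V$-set is useless when that set is all of $\delta$, and your parenthetical claim that $\delta\setminus\dom(\dot q_0^{G_0})$ is forced to be countably infinite is false. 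Far from being a boundary case, $\dom(\dot q_0^{G_0})\supseteq\delta$ is the main case --- the paper actually begins by \emph{extending} the condition so that $p_0\Vdash\dom(\dot p_1)\supseteq\delta$. What is needed there is a different diagonalization: one must defeat the \emph{values} $\dot p_1(\alpha)$ for $\alpha<\delta$ rather than reserve free coordinates. The paper does this by enumerating the countable poset $\{q\in\Add(\omega,1):q\le p_0\}$ (which is contained in $M$) as $\seq{q_\alpha:\alpha<\delta}$ with each condition recurring unboundedly often, fixing $r_\alpha\le q_\alpha$ deciding $\dot p_1(\alpha)=k_\alpha$, and putting into $D$ those $(t_0,\dot t_1)\in M\cap\R$ with $t_0\le r_\alpha$ and $t_0\Vdash\dot t_1(\alpha)\ne k_\alpha$ for some $\alpha$; density uses that $\dom(\dot s_1)$ is forced below some $\beta<\delta$ for any name $\dot s_1\in M$ (by ccc and elementarity) together with the recurrence of $s_0$ as some $q_\alpha$ with $\alpha>\beta$. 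This value-diagonalization is the missing idea, and without it your argument does not cover all conditions.

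A secondary, fixable issue: in the density argument you pick $n\notin\dom(p_1)$ first and then ask for $p''$ with $n\notin\dom(p'')$ deciding $\dot q''(\phi(n))$. Since $n$ and $\phi(n)$ both belong to $M$, there are names $\dot q_1\in M$ for which the value at $\phi(n)$ is entangled with $\dot r(n)$ (e.g., $\dot q_1(\phi(n))=\dot r(n)$ is forced), and then no such $p''$ exists, nor does passing to an extension $\dot q''\le\dot q_1$ help. You should instead choose $n$ so that $\phi(n)$ lies above a ccc-bound on $\dom(\dot q_1)$ available in $M$, so that the value at $\phi(n)$ can be prescribed freely.
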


\begin{proof}
To see this, fix $(p_0,\dot p_1)\in \R$; we need to argue that $(p_0,\dot p_1)$ is not strongly generic. Setting $\delta=M\cap\w_1$, we may assume upon extending $(p_0,\dot p_1)$ if necessary that $p_0\Vdash \dom(\dot p_1)\supseteq \delta$.

Since $\Add(\omega,1)$ is countable, we can fix  an enumeration $\seq{q_\alpha:\alpha<\delta}$ of the set $\{q\in\Add(\w,1):q\le p_0\}$ such that, for every $q\le p_0$, the set $\{\alpha<\delta:q_\alpha=q\}$ is unbounded in $\delta$.

Fix, for each $\alpha<\delta$, a condition $r_\alpha\le q_\alpha$ and an integer $k_\alpha$ so that $r_\alpha\Vdash \dot p_1(\alpha)=k_\alpha$. Let $D$ be the set
\[
\left\{(t_0,\dot t_1)\in M\cap\R: t_0\perp p_0 \vee \exists \alpha<\delta \left[t_0 \leq r_\alpha \wedge t_0 \Vdash (\alpha\in\dom(\dot t_1)\wedge \dot t_1(\alpha)\neq k_\alpha)\right]\right\}.
\]

\begin{claim}
$D$ is dense in $M\cap\R$
\end{claim}

\begin{proof}
Fix $(s_0,\dot s_1)\in M\cap\R$. We may assume that $s_0\le p_0$. Since $\Add(\w,1)$ has the ccc, there is a $\beta<\delta$ so that $s_0\Vdash \dom(\dot s_1)\subset\beta$. Fix $\alpha\in \delta \setminus (\beta+1)$ so that $q_\alpha=s_0$. Now find a name $\dot s_2\in M$ for an extension of $\dot s_1$ so that $s_0 \Vdash \dot s_2(\alpha)\neq k_\alpha$. Then $(r_\alpha, \dot s_2)\in D$ extends $(s_0,\dot s_1)$.
\end{proof}
Finally, observe that, if $(t_0,\dot t_1)\in D$, then $(t_0,\dot t_1)\perp (p_0,\dot p_1)$. Therefore, $(p_0,\dot p_1)\Vdash D\cap \dot K=\emptyset$, where $\dot K$ is a name for the $(\Add(\omega,1)\ast \dot\Add(\omega_1,1))$-generic filter.
\end{proof}

\section{Games}

Here we prove our main result, which establishes Theorems A and B from the introduction.

\begin{theorem}\label{main}
  Suppose that $(X, \tau)$ is a topological space and player II has a winning strategy in one of the following games:
\begin{itemize}
\item the strong countable fan tightness game for $X$ at $x\in X$;
\item the Rothberger game for $X$;
%\item the Menger game for $X$.
\end{itemize}
Then this is preserved in any forcing extension of $V$.
\end{theorem}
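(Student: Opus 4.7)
The plan is to construct, in any forcing extension $V^\P$, a winning strategy $\sigma^*$ for player II by ``lifting'' the ground-model winning strategy $\sigma$ through a shadow game in $V$. Two features of the setup make this natural: (i) player II plays a single object (a point or an open set) per round, so the resulting sequence of moves consists entirely of ground-model elements; and (ii) the winning conditions of both games are absolute between $V$ and $V^\P$, since they reduce to statements quantifying over the ground-model basis $\tau$ and over points of $X \in V$.

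I describe the construction for the strong countable fan tightness game at $x$; the Rothberger case is strictly parallel. Working in $V[G]$, at round $n$ player II maintains a decreasing sequence $p_0 \ge p_1 \ge \cdots \ge p_n$ of conditions in $G$ and a sequence of shadow sets $B_0, \ldots, B_n \in V$ with $x \in \cl^V(B_k)$ for each $k$. Given player I's play $A_n$ with $\P$-name $\dot A_n$, player II strengthens $p_{n-1}$ to a condition $p_n \in G$ that forces $x \in \cl(\dot A_n)$, and defines in $V$
\[
B_n := \{y \in X : \exists q \leq p_n,\ q \Vdash y \in \dot A_n\}.
\]
A direct density argument gives $x \in \cl^V(B_n)$: for any $U \in \tau$ with $x \in U$, the assumption $p_n \Vdash \dot A_n \cap U \neq \emptyset$ yields some $q \leq p_n$ and $y \in U$ with $q \Vdash y \in \dot A_n$, so $y \in B_n \cap U$. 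Player II then computes $y_n := \sigma(B_0, \ldots, B_n) \in B_n$ inside $V$ and plays $y_n$.

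The crux is ensuring $y_n \in A_n$ in $V[G]$, since a priori the witness $q \leq p_n$ forcing $y_n \in \dot A_n$ need not lie in $G$. I plan to choose $p_n$ from the intersection of $G$ with a dense set $D_n \sub \P$ below $p_{n-1}$, namely the set of conditions $p$ for which the $\sigma$-response computed from the shadow $B_n^p$ built from $p$ is itself strongly forced by $p$ into $\dot A_n$. Establishing density of $D_n$ is the main obstacle, and I expect it to require a fusion-style argument exploiting the monotone behavior of $B_n^p$ under strengthening of $p$, together with $\sigma$'s winning property. Once $y_n \in A_n$ is secured for every $n$, the condition $x \in \cl^{V[G]}(\{y_n : n<\omega\})$ follows by absoluteness: the shadow play $\seq{B_n, y_n : n < \omega}$ is $\sigma$-compatible in $V$, so $\sigma$'s winning gives $x \in \cl^V(\{y_n\})$, and this reduces to an absolute statement about $\tau$, $x$, and ground-model points. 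For the Rothberger game, the parallel shadow is the $V$-cover $\mathcal{V}_n^{p_n} := \{V \in \tau : \exists q \leq p_n,\ q \Vdash V \in \dot{\mathcal{U}}_n\}$, whose covering property is verified by the same density argument. The fact that player II plays a single element per round should be crucial to the density step, consistent with the stated failure of the analogous result for the Menger and countable fan tightness games, where player II plays finite sets at each round.
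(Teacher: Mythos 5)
Your overall architecture---forming in $V$ the shadow sets $B_n^p=\{y\in X:\exists q\le p\ (q\Vdash y\in\dot A_n)\}$, checking $x\in\cl(B_n^p)$ by a density argument, and feeding these to the ground-model strategy $\sigma$---is exactly the paper's. But there is a genuine gap at the step you yourself flag as the main obstacle, and the route you propose for closing it does not work for an arbitrary $\P$. Your dense set $D_n$ consists of conditions $p$ such that $p$ \emph{itself} forces $\sigma(\vec B{}^\frown\langle B_n^p\rangle)\in\dot A_n$. The natural attempt at density fails: given $p^*$, the point $y=\sigma(\vec B{}^\frown\langle B_n^{p^*}\rangle)$ lies in $B_n^{p^*}$, so some $p^{**}\le p^*$ forces $y\in\dot A_n$; but $B_n^{p^{**}}$ is in general a proper subset of $B_n^{p^*}$, so $\sigma$'s response to it may be a different point, and iterating this produces a descending chain with no reason to stabilize. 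A ``fusion-style'' argument needs structural hypotheses on $\P$ that an arbitrary forcing lacks. The correct move (the paper's Claim \ref{4.2}) is to decouple the two roles of the condition: one shows there is $p\le q_n$ \emph{in $G$} with $\sigma(\langle A^*_k:k<n\rangle{}^\frown\langle A^*_{n,p}\rangle)\in A_n$, where membership is evaluated in $V[G]$ and may be witnessed by a \emph{different} condition of $G$ below $p$. Equivalently, the set of $p''$ for which there exists $p$ with $p''\le p\le q_n$ and $p''\Vdash\sigma(\ldots{}^\frown\langle A^*_{n,p}\rangle)\in\dot A_n$ is dense below $q_n$ (take $p:=p^*$ and $p'':=p^{**}$ above), and any such $p''\in G$ places the witness $p$ into $G$ by upward closure. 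This reformulation is the key idea missing from your writeup.

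The final step is also not mere absoluteness. The $\omega$-sequence $\langle B_n:n<\omega\rangle$ and hence the set $\{y_n:n<\omega\}$ are constructed in $V[G]$ using $G$ and need not belong to $V$, so ``$x\in\cl^V(\{y_n\})$'' is not well-formed, and the fact that $\sigma$ wins every run \emph{lying in $V$} does not directly apply to this run. The paper closes this by a separate argument carried out in $V$: supposing some $r$ forces the lifted strategy to lose, i.e.\ to avoid some open $U\ni x$, one builds a decreasing sequence $\langle r_n:n<\omega\rangle$ of conditions deciding the auxiliary objects $\dot A_n$, $p_n$, $A^*_n$; the decided sequence $\langle A^*_n:n<\omega\rangle$ then genuinely lies in $V$, so $\sigma$ wins against it and produces some $x_n\in U$, while $r_n\le r$ forces $\dot x_k=x_k$ for $k\le n$, a contradiction. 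You need an argument of this shape; appealing to absoluteness of the winning condition does not suffice.
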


\begin{proof}
We will prove the theorem for the strong countable fan tightness game, the proof for the Rothberger game is similar and left to the reader.

  Let $\P$ be an arbitrary forcing notion. Given a point $x \in X$, let $\Game_x(X)$ denote the 
  strong countable fan tightness game for $X$ at the point $x$. Recall that $\Game_x(X)$ is a two-player game of length $\omega$. In each round, player I first plays a set $A_n \subseteq X$ such that $x \in \mathrm{cl}(A_n)$; then player II plays a point $x_n \in A_n$. Player II wins if $x \in \mathrm{cl}(\{x_n :n <\omega\})$; otherwise, player I wins.
  
  Let $\sigma_x$ be a winning strategy for player II in $\Game_x(X)$. Concretely, 
  $\sigma_x$ is a function such that $\dom(\sigma_x)$ is the set of all finite sequences 
  $\vec{A} = \langle A_k :k \leq n \rangle$ such that each $A_k$ is a subset of $X$ with 
  $x \in \mathrm{cl}(A_k)$. For each such $\vec{A}$, we have $\sigma(\vec{A}) \in A_n$.
  
  We aim to show that player II continues to have a winning strategy in $\Game_x(X)$ in 
  $V^{\P}$. Suppose that $G \subseteq \P$ is generic over $V$, and move to $V[G]$. We will 
  define a winning strategy $\sigma^*$ for player II in $\Game_x(X)$ by recursion on the 
  length of the play. We will ensure that, for a sequence $\vec{A} = \langle A_k :
  k \leq n \rangle$ such that each $A_k$ is a subset of $X$ with $x \in \mathrm{cl}(A_k)$, 
  the strategy outputs not only player II's play $\sigma^*(\vec{A})$ but also auxiliary 
  objects of the following type:
  \begin{itemize}
    \item a $\P$-name $\dot{A}_n \in V$ such that $(\dot{A}_n)_G = A_n$;
    \item a condition $p_n \in G$;
    \item $A^*_n := \{y \in X :\exists p' \leq p_n \ p' \Vdash_{\P} y \in \dot{A}_n\}$.
  \end{itemize}
  To help with the selection of these objects, fix in $V$ a sufficiently large regular cardinal 
  $\theta$ and a wellordering $\vartriangleleft$ of $H(\theta)$.
  
  Suppose that we are given a finite sequence $\vec{A} = \langle A_k :k \leq n \rangle$ 
  of subsets of $X$ such that $x \in \mathrm{cl}(A_k)$ for all $k \leq n$, and suppose that 
  we have already specified $\sigma^*(\vec{A} \restriction m)$ for all $m < n$, as well 
  as auxiliary objects $\dot{A}_k$, $p_k$, and $A^*_k$ for all $k < n$. Assume that we have 
  arranged so that $x \in \mathrm{cl}(A^*_k)$ for all $k < n$.
  
  First, let $\dot{A}_n$ be the $\vartriangleleft$-least $\P$-name for a subset of $X$ such that 
  $(\dot{A}_n)_G = A_n$. Let $q_n$ be the $\vartriangleleft$-least element of $G$ such that, in 
  $V$, we have $q_n \Vdash_{\P} x \in \mathrm{cl}(\dot{A}_n)$. For each $p \leq q_n$, 
  let $A^*_{n,p}$ be the set of all $y \in X$ for which there exists $p' \leq p$ such 
  that $p' \Vdash_{\P} y \in \dot{A}_n$. Note that $A^*_{n,p} \in V$ and 
  $x \in \mathrm{cl}(A^*_{n,p})$.
  
\begin{claim}\label{4.2}
There exists $p \leq q_n$ in $G$ such that 
  $\sigma_x(\langle A^*_k :k < n \rangle ^\frown \langle A^*_{n,p}\rangle) \in A_n$.
\end{claim} 

\begin{proof}
Suppose otherwise. We can then fix a condition $p^*\in G$ such that $p^*\le q_n$ and
\[
p^*\Vdash \neg\exists p\le q_n \left(p\in\dot G\wedge \sigma_x(\langle A^*_k :k < n \rangle ^\frown \langle A^*_{n,p}\rangle) \in \dot{A}_n\right).
\]
Set $y:=\sigma_x(\langle A^*_k :k < n \rangle ^\frown \langle A^*_{n,p^*}\rangle)$. Since $y\in A^*_{n,p^*}$, we can find $p^{**}\le p^*$ such that $p^{**}\Vdash y\in\dot A_n$.

Let $G^*$ be $\P$-generic over $V$ with $p^{**}\in G^*$. Since $p^{**}\le p^*$, it follows that $p^*\in G^*$, hence
\[
V[G^*]\models \neg\exists p\le q_n \left(p\in G^*\wedge \sigma_x(\langle A_k^*:k<n\rangle^\frown \langle A_{n,p}^*\rangle)\in (\dot{A}_n)_{G^*}\right).
\]
On the other hand, $p^{**}\Vdash y\in \dot A_n$, hence
\[
V[G^*]\models \left(p^*\le q_n \wedge p^*\in G^*\wedge \sigma_x(\langle A_k^*:k<n\rangle^\frown \langle A_{n,p^*}^*\rangle)\in (\dot{A}_n)_{G^*}\right).
\]
This is a contradiction.
\end{proof}

Let $p_n$ be the $\vartriangleleft$-least $p$ satisfying the claim, and let 
  $A^*_n := \{y \in X :\exists p' \leq p_n  (p' \Vdash_{\P} y \in \dot{A}_n)\}$. 
  Finally, set $\sigma^*(\vec{A}) = \sigma(\langle A^*_k :k \leq n \rangle)$.
  
  In $V$, let $\dot{\sigma}^*$ be a name for the strategy defined above. 
  We claim that $\dot{\sigma}^*$ is forced to be a winning strategy for player II. 
  Suppose not, and fix in $V$ a sequence $\langle \dot{B}_n :n < \omega \rangle$ 
  and a condition $r \in \P$ such that it is forced by $r$ that
  \begin{itemize}
    \item for all $n < \omega$, $\dot{B}_n$ is a subset of $X$ and $x \in 
    \mathrm{cl}(\dot{B}_n)$;
    \item if $\langle \dot{x}_n :n < \omega \rangle$ is the sequence produced by player II 
    playing $\dot{\sigma}^*$ against $\langle \dot{B}_n :n < \omega \rangle$, 
    then $x \notin \mathrm{cl}(\{\dot{x}_n :n < \omega\})$.
  \end{itemize}
  By extending $r$ further if necessary, we can in fact assume that there is an open set 
  $U \ni x$ such that $r \Vdash_{\P} \{\dot{x}_n :n < \omega\} \cap U = \emptyset$.
  
  For $n < \omega$, let $\dot{a}_n$, $\dot{p}_n$, and $\dot{A}^*_n$ 
  be $\P$-names forced by $r$ to be equal 
  to the auxiliary objects\footnote{We caution the reader that $\dot a_n$ is a name for a name.} $\dot{A}_n$, $p_n$, and $A^*_n$ produced during the run of 
  $\Game_x(X)$ in which player I plays $\langle \dot{B}_n :n < \omega \rangle$ and 
  player II plays according to $\dot{\sigma}^*$. Recursively define a decreasing sequence 
  $\langle r_n :n < \omega \rangle$ of conditions in $\P$ such that, for all $n < \omega$, 
  we have
  \begin{itemize}
    \item $r_n \leq r$;
    \item $r_n$ decides the values of $\dot{a}_n$, $\dot{p}_n$, and $\dot{A}^*_n$, say as 
    $\dot{A}_n$, $p_n$, and $A^*_n$;
    \item $r_n \leq p_n$.
  \end{itemize}
  For each $n < \omega$, note that $A^*_n = \{y \in X :\exists p' \leq p_n \ p' \Vdash_{\P} 
  y \in \dot{A}_n \}$. Now let $\langle x_n :n < \omega \rangle$ be the plays made by player II 
  in a run of $\Game_x(X)$ in which player I plays $\langle A^*_n :n < \omega \rangle$ and 
  player II plays according to $\sigma_x$. Because $\sigma_x$ is a winning strategy for player II, 
  we have $x \in \mathrm{cl}(\{x_n :n < \omega\})$; in particular, there is $n < \omega$ 
  such that $x_n \in U$. Now note that, by our definition of $\dot{\sigma}^*$, $r_n$ forces that 
  $\langle x_k :k \leq n \rangle$ is the sequence of the first $(n+1)$-many moves made 
  by player II in the run of $\Game_x(X)$ in which player I plays $\langle \dot{B}_n :n < 
  \omega \rangle$ and player II plays $\dot{\sigma}^*$; i.e., for all $k \leq n$, we have 
  $r_n \Vdash_{\P} \dot{x}_k = x_k$. Then $x_n \in U$ contradicts the fact that 
  $r_n \leq r$ and $r \Vdash_{\P} \{\dot{x}_i :i < \omega\} \cap U = \emptyset$.
\end{proof}

\begin{remark}
If we try to extend Theorem \ref{main} to the countable fan tightness case, we run into trouble with the proof of Claim \ref{4.2}, because player II makes more than once choice in each round: if we let $F:=\sigma_x(\seq{A_k^*:k<n}^\frown \seq{A_{n,p^*}^*})$, then by $F\subset A_{n,p^*}^*$ we can find, for each $y\in F_n$, some $p_y \le p^*$ with $p_y \Vdash y\in \dot A_n$, but there is no reason to believe that $\{p_y:y\in F\}$ has a lower bound. In fact, Theorem \ref{main} fails for the countable fan tightness game: let $X={}^{\omega_1}2$ with the product topology, and let $Y=C_p(X)$. By \cite[Example 4]{scheepersRemarksCountableTightness2014}, player II has a winning strategy in the countable fan tightness game for $Y$, but $Y$ is not countably tight in $V^{\Add(\omega_1,1)}$. Therefore, by Lemma \ref{game}, in $V^{\Add(\omega_1,1)}$, player II does not have a winning strategy in the countable fan tightness game for $Y$.

Theorem \ref{main} also fails for the Menger game. Again consider the space $X={}^{\omega_1}2$ with the product topology. By Tychonoff's theorem, $X$ is compact, so player II has a very simple winning strategy in the Menger game: if player I's play at stage $0$ is $\mathcal{U}_0$, then player II chooses a finite subcover $\mathcal{F}_0$, and plays arbitrarily at every other stage of the game. On the other hand, by Example \ref{X}, $X$ is not Lindel\"of in $V^{\Add(\omega_1,1)}$, hence not Menger and, by Lemma \ref{game}, player II does not have a winning strategy for the Menger game in $V[G]$.
\end{remark}

\section{Open questions}

While Theorems A and B are both quite general, referring to generic extensions by arbitrary forcings, the proofs use the forcing machinery in a very strong way, so it seems unlikely that a minor modification of the previous arguments will establish the analogous results for an arbitrary pair of transitive models $V\subset W$. This suggests the following questions:

\begin{question}
Assume that $0^\sharp$ exists (or even that $\PFA$ holds). Is there a topological space $(X,\tau)\in L$ so that for some $x\in X$, in $L$, player II has a winning strategy in the strong countable fan tightness game, but II has no such strategy in the game as played in the universe $V$?
\end{question}

\begin{question}
Assume that $0^\sharp$ exists (or even that $\PFA$ holds). Is there a topological space $(X,\tau)\in L$ so that, in $L$, player II has a winning strategy in the Rothberger game, but II has no such strategy in the game as played in the universe $V$?
\end{question}

\printbibliography

\end{document}